\let\oldmarginpar\marginpar
\renewcommand{\marginpar}[1]{\oldmarginpar{\small\textit{{#1}}}}
\setlist[enumerate,1]{label=(\roman*),font=\normalfont}
\newtheorem{theorem}{Theorem}
\newtheorem{corollary}[theorem]{Corollary}
\newtheorem{proposition}[theorem]{Proposition}
\theoremstyle{definition}
\theoremstyle{remark}
\crefname{remark}{Remark}{Remarks}
\crefname{claim}{Claim}{Claims}
\theoremstyle{definition}
\theoremstyle{remark}
\crefname{rmk}{Remark}{Remarks} 
\crefname{problem}{Problem}{Problems}
\crefname{conjecture}{Conjecture}{Conjectures}
\DeclareMathOperator{\sdiff}{\mathbin{\triangle}} 
\newcommand{\lset}[1]{\ensuremath{\{#1\}}} 
\newcommand{\R}{\mathbb{R}}
\newcommand{\C}{\mathcal{C}}
\newcommand{\B}{\mathcal{B}}
\renewcommand{\ge}{\geqslant} 
\renewcommand{\le}{\leqslant}
\renewcommand{\v}{\null}
\date{\today}
\title[Cycle space of polytopal graphs]{Cycle space of graphs of polytopes} 
\author{Guillermo Pineda-Villavicencio}
\address{Federation University,  Australia\\School of Information Technology, Deakin University, Geelong,  Australia} 
\email{\texttt{work@guillermo.com.au}}
\keywords{polytope,   cycle space, symmetric difference, bipartite graph, shelling}
\subjclass[2010]{Primary 52B05; Secondary 52B12}
\begin{document}
\begin{abstract} 
It is folklore that the cycle space of graphs of polytopes is generated by the cycles bounding the 2-faces. We provide a proof of this result that bypass homological arguments, which seem to be the most widely known proof.  As a corollary, we obtain a result of \citet{Blind1994} stating that  graphs of polytopes  are bipartite if and only if  graphs of every 2-face  are bipartite.     \end{abstract}

\maketitle  
 
\section{Introduction} 
A (convex) polytope is the convex hull of a finite set $X$ of points in $\R^{d}$.  The \textit{dimension} of a polytope in $\R^{d}$ is one less than the maximum number of affinely independent points in the polytope, and a polytope of dimension $d$ is referred to as a \textit{$d$-polytope}. A {\it face} of a polytope $P$ in $\R^{d}$ is $P$ itself, or  the intersection of $P$ with a hyperplane in $\R^{d}$ that contains $P$ in one of its closed halfspaces.  A face of dimension 0, 1, and $d-1$ in a $d$-polytope is a \textit{vertex}, an {\it edge}, and a {\it facet}, respectively. The set of vertices and edges of a polytope or a graph are denoted by $V$ and $E$, respectively. The \textit{graph} $G(P)$ of a polytope  $P$ is the abstract graph with vertex set $V(P)$ and edge set $E(P)$.   

Let $G$ be a graph, and let $G'$ and $G''$ be two spanning subgraphs of $G$. The  \textit{symmetric difference} $\sdiff$ of $G'$ and $G''$ is the spanning subgraph of $G$ whose edge set is the symmetric difference of $E(G')$ and $E(G'')$. A spanning subgraph $G'$ of a graph $G$ is an \textit{even subgraph} if every vertex of $G'$ has even degree. Every cycle in $G$ can be regarded as an even subgraph if  enough isolated vertices of $G$ are added. The set of all even subgraphs of a graph $G$ forms a vector space  $Z(G)$ over the field $GF(2)$, the 2-element field, with respect to the symmetric difference of spanning subgraphs; the space $Z(G)$ is called the \textit{cycle space} of $G$. And  the cycles of $G$, viewed as even subgraphs, span $Z(G)$. Thus, each even subgraph is the symmetric difference of cycles. See, for instance, in \citet[Sec.~1.9]{Die05}.


A cycle in a plane graph or a graph of a polytope is \textit{facial} if it bounds a face of the plane graph or a 2-face of the polytope.   
It is well known that every cycle in a  2-connected plane graph  is the symmetric difference of facial cycles  and that a 2-connected plane graph is bipartite if and only if every facial cycle is bipartite; see, for instance, \citet[Thm.~2.2.3, Cor.~2.4.6]{MohTho01}.  We provide elementary proofs of extensions of  these results to graphs of polytopes of all dimensions.   These extensions read as follows. 


 \begin{theorem}
\label{thm:Polytopal-graphs-cycle-2-face} For $d\ge 2$, every even subgraph in the graph of a $d$-polytope is the symmetric difference of facial cycles. In particular, every cycle in the graph of a $d$-polytope is the symmetric difference of facial cycles.
\end{theorem}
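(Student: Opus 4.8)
The plan is to prove the equivalent statement that the facial cycles \emph{span} the cycle space $Z(G(P))$; since the cycles span $Z(G(P))$ and every even subgraph is an element of $Z(G(P))$, this yields both assertions of the theorem at once. I would argue by induction on the dimension $d$. The base case $d=2$ is immediate: a $2$-polytope is a polygon, whose graph is a single cycle $C$ bounding its unique $2$-face, so $Z(G(P))=\{\emptyset, C\}$ and both elements are symmetric differences of the one facial cycle $C$.

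For the inductive step, fix $d\ge 3$ and let $F_1,\dots,F_s$ be a shelling of the boundary complex of $P$ (which exists by Bruggesser--Mani). Write $K_j=F_1\cup\cdots\cup F_j$. The engine of the proof is the following elementary gluing lemma, proved without homology: if a graph $G=A\cup B$ is the union of two subgraphs whose intersection $A\cap B$ is nonempty and connected, then $Z(G)=Z(A)+Z(B)$, i.e.\ every even subgraph of $G$ is the symmetric difference of an even subgraph of $A$ and an even subgraph of $B$. To prove it I would split the edges of a given even subgraph $Z$ into those lying in $A$ and those lying in $B$ but not in $A$; the odd-degree vertices of each part then lie in $V(A\cap B)$ and, because $Z$ is even, form the same set $S$. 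Fixing a spanning tree $T$ of $A\cap B$, there is a unique edge set $D\subseteq E(T)$ whose set of odd-degree vertices is exactly $S$; adding $D$ to both parts makes each part even while the two copies of $D$ cancel in the symmetric difference.

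I would then apply this lemma with $A=G(K_{j-1})$ and $B=G(F_j)$. The key geometric input is that their intersection is the graph of the complex $F_j\cap K_{j-1}$, which by the shelling property is a nonempty union of ridges of $F_j$: for $1<j<s$ it is the initial segment of a shelling of $\partial F_j$, hence a $(d-2)$-ball, and for $j=s$ it is all of $\partial F_j$, a $(d-2)$-sphere. Since $d-2\ge 1$, in either case this complex has connected $1$-skeleton, so the lemma gives $Z(G(K_j))=Z(G(K_{j-1}))+Z(G(F_j))$. Iterating from $K_1=F_1$ up to $K_s=\partial P$ (noting $G(K_s)=G(P)$) yields $Z(G(P))=\sum_{i=1}^{s} Z(G(F_i))$. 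Each facet $F_i$ is a $(d-1)$-polytope with $d-1\ge 2$, so by the inductive hypothesis $Z(G(F_i))$ is spanned by the facial cycles of $F_i$; and every $2$-face of $F_i$ is a $2$-face of $P$, so these are facial cycles of $P$. Hence the facial cycles of $P$ span $Z(G(P))$, completing the induction.

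I expect the main obstacle to be the gluing lemma together with the verification that $F_j\cap K_{j-1}$ always has a connected graph: this is precisely where the topology of the sphere would normally enter through a homological argument, and replacing it requires both the combinatorial parity-fixing argument over $GF(2)$ and a careful appeal to the defining property of shellings (that every initial segment of a shelling of $\partial F_j$ is a ball, hence connected for $d\ge 3$). I would also need the routine fact that a $1$-face of $P$ contained set-theoretically in $K_{j-1}$ is a face of some $F_i$ with $i<j$, so that the graph of the intersection complex really equals the intersection of the graphs $G(K_{j-1})\cap G(F_j)$. Once these points are settled, the induction is straightforward.
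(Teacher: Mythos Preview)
Your proof is correct and follows the same overall architecture as the paper's: induction on $d$, a shelling $F_1,\dots,F_s$, a secondary induction along the shelling, and the crucial use of the connectedness of $G(K_{j-1})\cap G(F_j)$ (which the paper derives, as you do, from the shelling property via strong connectedness of the partial complex).

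The one substantive difference is how the gluing step is executed. The paper argues directly: it takes a hypothetical problematic even subgraph in $G_n$ with the fewest edges outside $G(F_n)$, reduces to a cycle $C$, picks the first and last vertices $x_1,x_j$ where $C$ meets $G_{n-1}\cap G(F_n)$, finds a single $x_1$--$x_j$ path $M$ in that intersection, and writes $C$ as the symmetric difference of a cycle in $G_{n-1}$ and an even subgraph with fewer ``outside'' edges. You instead isolate a clean standalone lemma, $Z(A\cup B)=Z(A)+Z(B)$ whenever $A\cap B$ is connected, and prove it by a spanning-tree/$T$-join parity-fix. Your lemma is the natural generalisation: the paper's path $M$ is exactly the $T$-join for the two odd vertices that arise after its minimality reduction, while your argument handles all odd vertices at once and avoids the minimal-counterexample machinery. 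The trade-off is that your version requires the small extra observation that the odd-degree sets of the two halves coincide and lie in $V(A\cap B)$, whereas the paper's reduction to a cycle makes this automatic. Both approaches are elementary and of comparable length; yours has the advantage of separating the graph-theoretic content from the polytopal content.
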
    

A first corollary of \cref{thm:Polytopal-graphs-cycle-2-face} is immediate.
 
\begin{corollary} For $d\ge2$, the cycle space of the graph of a $d$-polytope is spanned by the facial cycles of the polytope. 
\label{cor:Polytopal-graphs-cycle-space}
\end{corollary}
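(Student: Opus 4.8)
The plan is to argue by induction on the dimension $d$, using a shelling of the boundary complex as the engine and thereby avoiding homology altogether. A key decision is to prove the full even-subgraph statement rather than only the statement about single cycles: the extra generality is exactly what makes the induction close, because the intermediate objects produced below are even subgraphs, not cycles. The base case $d=2$ is immediate, since the graph of a polygon is a single cycle, which is the unique facial cycle, and its only even subgraphs are the empty graph and the whole cycle.

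For the inductive step I would fix a $d$-polytope $P$ with $d\ge 3$ and assume the theorem for all $(d-1)$-polytopes. In particular, for every facet $F$ of $P$ the space $Z(G(F))$ is spanned by the cycles bounding the $2$-faces of $F$; since a $2$-face of $F$ is a $2$-face of $P$, these are facial cycles of $P$. Now take a shelling $F_1,\dots,F_m$ of $\partial P$, write $P_k=F_1\cup\cdots\cup F_k$ for the resulting subcomplexes, and let $G_k$ denote the graph of $P_k$. I would then prove, by a second (inner) induction on $k$, that every even subgraph of $G_k$ is a symmetric difference of facial cycles contained in $P_k$. The case $k=1$ is the outer hypothesis applied to $F_1$, and since $G_m=G(P)$ this yields the theorem.

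The heart of the argument is the passage from $k-1$ to $k$. Set $R=F_k\cap P_{k-1}$; by the shelling property $R$ is a nonempty union of ridges of $F_k$ forming an initial segment of a shelling of $\partial F_k$. Given an even subgraph $E$ of $G_k$, let $E_{F_k}$ be the subgraph consisting of those edges of $E$ that lie in $F_k$. A parity count shows every odd-degree vertex of $E_{F_k}$ lies in $R$: if $v\in F_k\setminus R$ then every edge of $E$ at $v$ already lies in $F_k$, so $\deg_{E_{F_k}}(v)=\deg_E(v)$ is even. Pairing these (evenly many) odd vertices and joining each pair by a path inside the graph of $R$ yields a subgraph $A$, supported on edges of $R$, such that $D:=E_{F_k}\sdiff A$ is an even subgraph of $G(F_k)$. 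By the outer hypothesis $D$ is a symmetric difference of facial cycles, and one checks that $E\sdiff D$ has no edge in $F_k\setminus R$, hence is an even subgraph of $G_{k-1}$. The inner hypothesis rewrites $E\sdiff D$ as a symmetric difference of facial cycles, and $E=D\sdiff(E\sdiff D)$ completes the step.

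The point that I expect to demand the most care is the routing of the correcting paths, that is, the claim that the $1$-skeleton of $R$ is connected so that $A$ exists. This is where the shelling is genuinely used: an initial segment of a shelling of $\partial F_k$ is a union of ridges in which each successive ridge meets the union of its predecessors in a nonempty union of its own facets, and once $d\ge 3$ every such face contains a vertex; starting from the connected $1$-skeleton of a single ridge, connectivity is therefore preserved throughout. Establishing this connectivity, together with the bookkeeping that $E\sdiff D$ indeed avoids the new edges in $F_k\setminus R$, is the only nonroutine part of the proof.
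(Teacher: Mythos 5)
Your argument is correct, and since it establishes the full even-subgraph statement it is really a proof of \cref{thm:Polytopal-graphs-cycle-2-face} (from which the paper derives this corollary in one line), so the right comparison is with the paper's proof of that theorem. The architecture is shared: an outer induction on $d$, an inner induction along a shelling $F_1,\ldots,F_s$, and the crucial use of the shelling condition to show that the graph of $R=F_k\cap(F_1\cup\cdots\cup F_{k-1})$ is connected (your hand-unwound connectivity argument is exactly the content of \cref{prop:Polytopal-graphs-shelling-strongly-connected-complex,prop:Polytopal-graphs-strongly-connected-complex-connectivity}). Where you genuinely diverge is in the inductive step over $k$. The paper argues by minimal counterexample: it picks a ``problematic'' even subgraph with the fewest edges outside $G(F_n)$, reduces to the case where it is a single cycle $C$, and performs surgery on $C$, rerouting the arc lying outside $F_n$ through a path $M$ in the intersection so that $C=C_1\sdiff W$ with $C_1$ a cycle in $G_{n-1}$ and $W$ an even subgraph with strictly fewer edges outside $G(F_n)$. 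You instead decompose an arbitrary even subgraph $E$ directly: restrict to $F_k$, repair parities with a correction term $A$ supported on $R$ (legitimate precisely because every odd-degree vertex of the restriction lies in $R$), and hand the two pieces of $E=D\sdiff(E\sdiff D)$ to the outer and inner hypotheses respectively. Your version dispenses with the extremal argument and with the reduction of even subgraphs to cycles, at the cost of the parity and support bookkeeping you flag, both of which go through.
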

	 
 As a second corollary of \cref{thm:Polytopal-graphs-cycle-2-face},  we obtain a characterisation of bipartite polytopal graphs, which was proved in \citet[Sec.~3]{Blind1994} via shellings of polytopes; this proof  is the inspiration for our proof of \cref{thm:Polytopal-graphs-cycle-2-face}. We remark that this characterisation was known to \citet[p.~154]{Cox73} but his proof was incorrect, as pointed out in \citet[Sec.~3]{Blind1994}.  
 
 \begin{corollary}[{\citealt{Blind1994}}]  
\label{cor:Polytopal-graphs-bipartite-2-face} For $d\ge 2$, the graph of a $d$-polytope is bipartite if and only if  every 2-face of the polytope is bipartite. 
\end{corollary}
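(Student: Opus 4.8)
The plan is to derive both implications of \cref{cor:Polytopal-graphs-bipartite-2-face} from \cref{thm:Polytopal-graphs-cycle-2-face}, using the standard fact that a graph is bipartite if and only if each of its cycles has even length (see, e.g., \citet[Prop.~1.6.1]{Die05}). The forward implication is immediate: the boundary of any 2-face of $P$ is a cycle in $G(P)$, so if $G(P)$ is bipartite then this cycle, and hence the 2-face, is bipartite. All the content is in the converse, where the hypothesis ``every 2-face is bipartite'' is rephrased as ``every facial cycle has even length,'' and the goal becomes to show that every cycle of $G(P)$ has even length.

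The key observation I would isolate is that the parity of the number of edges defines a $GF(2)$-linear functional on the cycle space. Concretely, let $\phi\colon Z(G(P))\to GF(2)$ send an even subgraph to the parity of its edge count. For any two spanning subgraphs $G'$ and $G''$ one has $\card{E(G'\sdiff G'')}=\card{E(G')}+\card{E(G'')}-2\card{(E(G')\cap E(G''))}$, so $\card{E(G'\sdiff G'')}\equiv\card{E(G')}+\card{E(G'')}\pmod 2$; thus $\phi$ respects the symmetric difference and is indeed linear. This is the only point requiring any verification, and it is just inclusion--exclusion reduced modulo $2$.

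With $\phi$ in hand the converse follows at once. Given any cycle $C$ in $G(P)$, \cref{thm:Polytopal-graphs-cycle-2-face} writes $C=C_1\sdiff\cdots\sdiff C_k$ for facial cycles $C_1,\dots,C_k$, whence linearity gives $\phi(C)=\phi(C_1)+\cdots+\phi(C_k)$. By hypothesis each $\phi(C_i)=0$, so $\phi(C)=0$, meaning $C$ has even length. As every cycle of $G(P)$ is then even, $G(P)$ is bipartite. I do not anticipate a genuine obstacle here: once \cref{thm:Polytopal-graphs-cycle-2-face} supplies the facial-cycle decomposition, the argument reduces to the linearity of edge-count parity and the dictionary between ``bipartite'' and ``all cycles even,'' and it applies uniformly for all $d\ge 2$.
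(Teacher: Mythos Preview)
Your proof is correct and follows essentially the same route as the paper: both directions match, and the converse in each case rests on the fact that edge-count parity is additive under symmetric difference, so an odd cycle cannot be a symmetric difference of even facial cycles. The paper argues this by contrapositive and leaves the parity additivity implicit, whereas you prove the direct implication and make the linearity of the parity functional explicit; these are stylistic rather than substantive differences.
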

\begin{proof} 
If the graph of a polytope is bipartite, then every facial cycle must have even length and so it is bipartite.  If a  graph $G$ of a polytope is nonbipartite, then $G$ has a cycle $C$ of odd length.   By \cref{cor:Polytopal-graphs-cycle-space}, the cycle $C$ (as an even subgraph) is the symmetric difference of facial cycles of $G$. Accordingly, one of these facial cycles must have odd length.
\end{proof}

\cref{thm:Polytopal-graphs-cycle-2-face}, \cref{cor:Polytopal-graphs-cycle-space}, and \cref{cor:Polytopal-graphs-bipartite-2-face} are known to the community of discrete geometry.   \cref{thm:Polytopal-graphs-cycle-2-face} is usually proved by homological arguments \citep{Nev22}, and \cref{cor:Polytopal-graphs-bipartite-2-face} follows from it, as we illustrated it.


\section{Proofs}

A {\it  polytopal complex} $\mathcal C$ is a finite, nonempty collection of polytopes in $\R^{d}$ where the faces  of each polytope in $\mathcal C$ all belong to $\mathcal C$ and where polytopes intersect only at faces. The \textit{boundary complex} $\B(P)$ of a polytope $P$ is the set of faces of $P$ other than $P$ itself, while the \textit{complex} $\C(P)$ of $P$ is the set of faces of $P$. 

 Let $\C$ be a \textit{pure polytopal complex}; that is, each of the faces of $\C$ is contained in some facet. A \textit{shelling} of $\C$ is a linear ordering $F_{1},\ldots,F_{s}$ of its facets such that either $\dim \C=0$, in which case the facets are vertices, or it satisfies the following:
\begin{enumerate}
\item The boundary complex of $F_{1}$ has a shelling.
\item For $2\le j\le s$, the intersection  
\[F_{j}\cap \left(\bigcup_{i=1}^{j-1} F_{i}\right)=R_{1}\cup \cdots\cup R_{r}\] 
is nonempty and the beginning $R_{1}, \ldots, R_{r}$ of a shelling $R_{1}, \ldots, R_{r},   R_{r+1},\ldots$, $ R_{t}$ of  $\B(F_{j})$.           
\end{enumerate}  
 
 \citet{BruMan71} proved that every polytope  admits a shelling. 


 A pure polytopal complex $\C$ is {\it strongly connected} if every pair of facets $F$ and $F'$ is connected by a path $F_{1}\ldots F_{n}$ of facets in $\C$ such that $F_{i}\cap F_{i+1}$ is a ridge of $\C$ for $1\le i\le n-1$, $F_{1}=F$ and $F_{n}=F'$. This definition implies the following two assertions.

\begin{proposition} For $d\ge 1$, the graph of a strongly connected $d$-complex is connected.
\label{prop:Polytopal-graphs-strongly-connected-complex-connectivity} 
\end{proposition}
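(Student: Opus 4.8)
The plan is to derive the connectivity of the graph of $\C$ from strong connectivity by gluing together the graphs of the individual facets. Write $G(\C)$ for the graph of $\C$ and, for a facet $F$, write $G(F)$ for its graph. Two observations drive the argument. First, since $\C$ is pure, every vertex and every edge of $\C$ lies in some facet, so $G(\C)=\bigcup_{F} G(F)$, the union ranging over the facets of $\C$; it therefore suffices to connect the various facet-subgraphs to one another within $G(\C)$. Second, each facet $F$ of a $d$-complex is a $d$-polytope, and it is well known that the graph of a $d$-polytope is connected for $d\ge 1$ (for $d=1$ this is immediate, and for $d\ge 2$ one may, for instance, join any two vertices by a monotone edge-path for a generic linear functional, moving at each step to a neighbour that increases the functional). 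Hence each $G(F)$ is connected.

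The next step records why ridge-adjacency of facets translates into overlap of their graphs. If $F$ and $F'$ are facets of $\C$ whose intersection $R=F\cap F'$ is a ridge, then $R$ is a $(d-1)$-dimensional face, hence a nonempty polytope, and so contains at least one vertex $v$. As $v$ is a vertex of both $F$ and $F'$, the connected graphs $G(F)$ and $G(F')$ share $v$; consequently their union $G(F)\cup G(F')$ is a connected subgraph of $G(\C)$.

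It remains to assemble these pieces. Let $u$ and $u'$ be any two vertices of $\C$, and choose facets $F\ni u$ and $F'\ni u'$. Strong connectivity supplies a path $F=F_{1},\ldots,F_{n}=F'$ of facets with each $F_{i}\cap F_{i+1}$ a ridge. By the previous paragraph, consecutive graphs $G(F_{i})$ and $G(F_{i+1})$ share a vertex, so an easy induction on $n$ shows that $G(F_{1})\cup\cdots\cup G(F_{n})$ is connected; in particular $u$ and $u'$ are joined by an edge-path in $G(\C)$. Thus $G(\C)$ is connected. The argument is elementary throughout, and I expect the only genuinely nontrivial ingredient to be the connectivity of the graph of a single polytope; the gluing induction along the ridge-path, once the facets are known to overlap in vertices, is routine.
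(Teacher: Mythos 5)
Your proof is correct, and since the paper offers no argument for this proposition (it is stated as an immediate consequence of the definition), your write-up simply supplies the intended routine verification: facet graphs are connected, ridge-adjacent facets share a vertex because a ridge is a nonempty $(d-1)$-polytope, and purity plus strong connectivity lets you chain any two vertices together. Nothing further is needed.
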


For a set $\{P_{1},\ldots,P_{n}\}$ of polytopes, we denote by $\C(P_{1}\cup \cdots \cup P_{n})$ the polytopal complex $\C(P_{1})\cup \cdots \cup \C(P_{n})$.

 \begin{proposition}
\label{prop:Polytopal-graphs-shelling-strongly-connected-complex} Let $P$ be a $d$-polytope, and let $F_{1},\ldots F_{s}$ be a shelling of $P$. Then the complex $\C(F_{1}\cup \cdots\cup F_{i})$ is a strongly connected $(d-1)$-complex, for each $1\le i\le s$. 
\end{proposition}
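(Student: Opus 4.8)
The plan is to verify the three requirements separately—that $\C(F_1\cup\cdots\cup F_i)$ is a polytopal complex, that it is pure of dimension $d-1$, and that it is strongly connected—the last being the only substantial point, handled by induction on $i$. (I assume $d\ge 2$; the case $d=1$ is immediate under the convention that the empty face is the unique ridge.)

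First I would record the easy structural facts. Since every face of a facet $F_j$ of $P$ is itself a face of $P$, the collection $\C(F_1\cup\cdots\cup F_i)$ is a subfamily of the face poset $\C(P)$ that is closed under taking faces; inheriting the intersection property from $\C(P)$, it is therefore a polytopal complex. Each $F_j$ is a $(d-1)$-polytope, and no $F_j$ is contained in another face of the complex (a face of the complex containing $F_j$ would be a $(d-1)$-face of $P$, hence some $F_k$, and two distinct facets of $P$ cannot contain one another). Thus the facets of $\C(F_1\cup\cdots\cup F_i)$ are exactly $F_1,\ldots,F_i$, every face lies in some $F_j$, and the complex is pure of dimension $d-1$; its ridges are the $(d-2)$-faces of $P$ occurring as facets of some $F_j$.

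For strong connectivity it is convenient to reformulate: $\C(F_1\cup\cdots\cup F_i)$ is strongly connected exactly when the graph on the facets $F_1,\ldots,F_i$, with an edge joining two facets that share a ridge, is connected. I would prove connectivity of this graph by induction on $i$, the case $i=1$ being trivial. For the inductive step, assume the graph on $F_1,\ldots,F_{i-1}$ is connected. By condition~(ii) in the definition of a shelling, the intersection
\[
F_i\cap\left(\bigcup_{j=1}^{i-1}F_j\right)=R_1\cup\cdots\cup R_r
\]
is nonempty and each $R_k$ is a facet of $F_i$, hence a $(d-2)$-face of $P$, that is, a ridge of the complex.

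The heart of the argument is to show that $R_1$ is shared between $F_i$ and some earlier facet $F_m$ with $m<i$. Here I would use the standard fact that a ridge of a polytope lies in exactly two of its facets; applied to the ridge $R_1$ of $P$, one of these is $F_i$ and the other is some facet $F_m$. To place $F_m$ among $F_1,\ldots,F_{i-1}$, pick a point $x$ in the relative interior of $R_1$; since $x\in\bigcup_{j<i}F_j$, it lies in some $F_j$ with $j<i$, and because $R_1$ is the smallest face of $P$ containing $x$, that face $F_j$ must contain $R_1$, forcing $F_j=F_m$. Then $F_i\cap F_m$ is a common face of the two facets containing the $(d-2)$-face $R_1$ and of dimension at most $d-2$, so $F_i\cap F_m=R_1$. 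Hence $F_i$ is joined by the ridge $R_1$ to the facet $F_m$ of the previous complex, and adding this single edge to the connected graph on $F_1,\ldots,F_{i-1}$ given by the inductive hypothesis yields connectivity of the graph on $F_1,\ldots,F_i$. The main obstacle is precisely this identification: one must be sure that the facet $R_1$ of $F_i$ lying in the old union is a \emph{full} ridge shared with a single earlier facet, rather than being covered piecemeal by several facets. The relative-interior argument together with the two-facets-per-ridge fact settles this cleanly, and nothing beyond the nonemptiness and the ridge decomposition supplied by the shelling condition is needed.
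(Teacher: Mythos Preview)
Your proof is correct. The paper itself does not supply a proof of this proposition: it is stated, together with the preceding proposition, as a direct consequence of the definitions (``This definition implies the following two assertions''), so there is no argument in the paper to compare against. Your induction on $i$, using the shelling condition to exhibit a ridge $R_1$ joining $F_i$ to an earlier facet, is the standard way to make this implication explicit; the relative-interior trick to pin down a single earlier facet containing $R_1$ is a clean way to handle the only genuine subtlety.
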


 
	\begin{proof}[Proof of \cref{thm:Polytopal-graphs-cycle-2-face}] 
For $d= 2$, the theorem is trivially true. 
So assume that $d\ge 3$ and proceed by induction on $d$. 
    
Let $S:=F_{1},\ldots,F_{s}$ be a shelling of $P$, let \[\C_{n}:=\C( F_{1}\cup\cdots\cup F_{n}),\]  and let $G_{n}$ be the graph of $\C_{n}$. For $1\le i\le s-1$, each facet $F_{i}$ is a $(d-1)$-polytope, and so the induction hypothesis ensures that the  even subgraphs in each $G(F_{i})$ are generated by facial cycles in such a graph.   
      
Since all the edges of $P$ lie in $G_{s-1}$---and in particular, $G_{s-1}=G$---it suffices to prove that every even subgraph in each $G_{n}$ is spanned by the  facial cycles of $G_{n}$, for $1\le n\le s-1$.  We further proceed  by induction on $n$. The case $n=1$ holds, because $G_{1}=G(F_{1})$.   We then assume that each even subgraph of $G_{n-1}$, where $1\le n-1\le s-2$,  is spanned by the facial cycles in $G_{n-1}$ and that some \textit{problematic} even subgraph in $G_{n}$ is not. Because $G_{n}=G_{n-1}\cup G(F_{n})$, each problematic even subgraph in $G_{n}$ is contained in neither $G_{n-1}$ nor $G(F_{n})$.  Let $C$  be a problematic even subgraph in $G_{n}$ with a smallest number of edges in $G_{n-1}\setminus G(F_{n})$ among the problematic even subgraphs in $G_{n}$. Each even subgraph is the symmetric difference of cycles; and consequently, we have that $C$ is a cycle of $G_{n}$. 

 
  A  path from a vertex $x$ to a vertex $y$ in a graph is an {\it $x-y$ path}, and for a path $L:=x_{0}\ldots x_{n}$ and for $0\le i\le j\le n$, we write $x_{i}Lx_{j}$  to denote the subpath $x_{i}\ldots x_{j}$. 

Think of $C$ as a cycle directed  from in $G_{n-1}\setminus G(F_{n})$ to $G(F_{n})$, starting at a vertex in $ G_{n-1}\setminus G(F_{n})$; the addition of $G(F_{n})$  to $G_{n-1}$ may introduce new edges and no new vertices.  The cycle $C$ intersects $G_{n-1}\cap G(F_{n})$ in the distinct vertices $\v x_{1},\ldots, \v x_{j}$ found in this order as we traverse $C$.  That is, $\v x_{1}$ is the first vertex on $C$ that touches $G_{n-1}\cap G(F_{n})$,  $\v x_{j}$ is the last vertex on $C$ that touches $G_{n-1}\cap G(F_{n})$, and the directed subpath $L:=\v x_{j} C \v x_{1}$ of $C$ lies in $G_{n-1}\setminus G(F_{n})$ except for $\v x_{1}$ and $\v x_{j}$.  If $G_{n-1}\cap G(F_{n})\cap C$ consists of one vertex, then $C$ would be a union of cycles with a cycle in $G_{n-1}$ and a cycle in $G(F_{n})$, a contradiction to $C$ being a cycle. Therefore $j\ge 2$. 
 
 The graph $G_{n-1}\cap G(F_{n})$  is connected. Since $S$ is a shelling, the complex  $\B(F_{n})\cap \C_{n-1}$ is the beginning of a shelling of $F_{n}$, which implies that $\B(F_{n})\cap \C_{n-1}$ is a strongly connected $(d-2)$-complex (\cref{prop:Polytopal-graphs-shelling-strongly-connected-complex}); the connectivity of $G_{n-1}\cap G(F_{n})$ now follows from  \cref{prop:Polytopal-graphs-strongly-connected-complex-connectivity}. 
 
 From the connectivity of $G_{n-1}\cap G(F_{n})$ follows the existence of an $\v x_{1}-\v x_{j}$ path $M$ in $G_{n-1}\cap G(F_{n})$.  Concatenating the paths $L$ and $M$, we form a cycle $C_{1}$ in $G_{n-1}$. By the induction hypothesis on $n$, this cycle $C_{1}$ is the symmetric difference of facial cycles in $G_{n-1}$. Let $L'$ be the directed subpath $L'$ of $C$ from $\v x_{1}$ to $\v x_{j}$. Then $C=L\cup L'$ and $V(L)\cap V(L')=\lset{\v x_{1},\v x_{j}}$. In addition, let $W:=L'\sdiff M$; here we understand $L'$ and $M$ as spanning subgraphs of $G_{n}$. It follows that $W$ is an even subgraph of $G$, since $\v x_{1}$ and $ \v x_{j}$  have each degree one in both $L'$ and $W$, and every other vertex in $W$ has even degree in both $L'$ and $W$. It is also the case that  $W$ has fewer edges in $G_{n-1}\setminus G(F_{n})$ than $C$, and so it is the symmetric difference of facial cycles in $G_{n}$. 
 
The cycle $C$ is the symmetric difference of $C_{1}$ and $W$, and as a consequence, it is the symmetric difference of facial cycles in $G_{n}$. This contradiction ensures that the cycle $C$ does not exist, which amounts to saying that every even subgraph in $G_{n}$ is spanned by facial cycles in $G_{n}$. Hence the induction is complete, and so is the proof of the theorem.   
 \end{proof}

\section{Acknowledgements} 

The author wants to thank Julien Ugon and Eran Nevo for helpful comments on the manuscript.

\end{document}